\newtheorem{theorem}{Theorem}[section]
\newtheorem{lemma}[theorem]{Lemma}
\newtheorem{corollary}[theorem]{Corollary}
\theoremstyle{definition}
\newtheorem{remark}[theorem]{Remark}
\newtheorem{example}[theorem]{Example}
\DeclareMathOperator{\rank}{rank}
\title{Bounds on $s$-distance sets with strength $t$} 
\author{Hiroshi Nozaki, Sho Suda}
\begin{document}
\maketitle

\renewcommand{\thefootnote}{\fnsymbol{footnote}}
\footnote[0]{2010 Mathematics Subject Classification: 05E30 (51D20).}
\footnote[0]{Both of authors are supported by JSPS Research Fellowship.}
\footnote[0] {The first author stays at the University of Texas at Brownsville from August 24th, 2009 to August 23rd, 2010.}

\begin{abstract}
A finite set $X$ in the Euclidean unit sphere is called an $s$-distance set if the set of distances 
between any distinct two elements of $X$ has size $s$. 
We say that $t$ is the strength of $X$ if $X$ is a spherical $t$-design but not a spherical $(t+1)$-design. 
Delsarte-Goethals-Seidel gave an absolute bound for the cardinality of an $s$-distance set. 
The results of Neumaier and Cameron-Goethals-Seidel imply that if $X$ is a spherical $2$-distance set with strength $2$, then the known absolute bound for $2$-distance sets is improved. 
This bound are also regarded as that for a strongly regular graph with the certain condition of the Krein parameters. 
 
In this paper, we give two generalizations of this bound to spherical $s$-distance sets with strength $t$ (more generally, to $s$-distance sets with strength $t$ in a two-point-homogeneous space), and 
to $Q$-polynomial association schemes. 
 First,  for any $s$ and $s-1 \leq t \leq 2s-2$, we improve the known absolute bound for the size of a spherical $s$-distance set with strength $t$. 
Secondly, for any $d$, we give an absolute bound for the size of a $Q$-polynomial association scheme of class $d$ with the certain conditions of the Krein parameters.

\end{abstract}

\textbf{Key words}: absolute bound, cometric association scheme, $Q$-polynomial association scheme,
two-point-homogeneous space, few distance set, $s$-distance set, $t$-design, strongly regular graph. 

\section{Introduction} 
	Delsarte, Goethals and Seidel \cite{Delsarte-Goethals-Seidel} introduced the concept of spherical $t$-designs. 
	Let ${\rm Harm}_i(\mathbb{R}^m)$ be the linear space of all real homogeneous harmonic polynomials of degree $i$, with $m$ variables. 
	A finite subset $X$ in the Euclidean unit sphere $S^{m-1}$ is called a spherical $t$-design 
if $\sum_{x\in X} \varphi(x)=0$ for any $\varphi \in {\rm Harm}_i(\mathbb{R}^m)$ and any $1 \leq i \leq t$.
	We say that $t$ is the strength of $X$ if $X$ is a spherical $t$-design but not a spherical $(t+1)$-design.
	We have an absolute lower bound for the cardinality of a spherical $2l$-design in $S^{m-1}$, namely 
	\[
	|X| \geq h_0+h_1+ \cdots +h_{l},
	\]
	where $|\ast|$ denotes the cardinality, and $h_i=h_{i,m}=\dim {\rm Harm}_i(\mathbb{R}^m)$. 
	Indeed, $h_{i,m}=\binom{m+i-1}{i}-\binom{m+i-3}{i-2}$, $h_{0,m}=1$, and $h_{1,m}=m$.
	
	The size of the set of distances between distinct two elements of $X\subset S^{m-1}$ is one of important parameters of spherical codes. 
	A finite $X \subset S^{m-1}$ is called an $s$-distance set if $|A(X)| =s$, where $A(X):=\{(x,y) \mid x,y \in X, x\ne y\}$ and $(,)$ denotes the usual inner product. 
	An absolute upper bound for an $s$-distance set is  
	\[
	|X| \leq h_0 +h_1 + \cdots +h_s. 
	\]
	 
	We have the inequality $t \leq 2s$ for a spherical $s$-distance set with strength $t$ \cite{Delsarte-Goethals-Seidel}. 
	One of fundamental results related with the theory of association schemes is that 
if $t \geq 2s-2$, then $X$ carries a $Q$-polynomial association scheme of class $s$ \cite{Delsarte-Goethals-Seidel}.  

	A spherical $2$-distance set with strength at least $2$ carries an association scheme of class $2$, that is, a strongly regular graph. 
	Conversely, a strongly regular graph is embedded to the unit sphere as a $2$-distance set with strength at least $2$ faithfully ({\it i.e.} the map is injective) \cite{Cameron-Goethals-Seidel}. 
	
	Cameron, Goethals and Seidel \cite{Cameron-Goethals-Seidel} proved that if the Krein parameter 
$q_{1,1}^1$ of a strongly regular graph is not equal to $0$, then the spherical embedding with respect to 
the primitive idempotent $E_1$ has strength $2$ as a design in $S^{m_1-1}$, where $m_1$ is the 
rank of $E_1$. 
	
	On the other hand, Neumaier \cite{Neumaier} proved that if $q_{1,1}^1 \ne 0$, then the size of the vertex set of the strongly regular graph is bounded above by $m_1(m_1+1)/2=h_{0,m_1}+h_{2,m_1}$. 
	
	These two results, due to Cameron-Goethals-Seidel and Neumaier, imply that if $X \subset S^{m-1}$ is a 
spherical $2$-distance set with strength $2$, then $|X|\leq h_{0,m}+h_{2,m}$. 
Remark that the known absolute bound for a $2$-distance set is improved because of the assumption of $t$-designs. 

We have several examples attaining the bound $|X|\leq h_{0,m_1}+h_{2,m_1}$, namely triangular graphs 
\cite{Hoffman, Connor, Shrikhande}, Chang graphs \cite{Chang}, and the strongly regular graphs obtained from the 
regular two-graph on $276$ vertices \cite{Goethals-Seidel,Haemers-Tonchev}.  
 
In this paper, we give two generalizations of this bound to $s$-distance sets with strength $t$ in $S^{m-1}$ (more generally, a 
two-point-homogeneous space), and to $Q$-polynomial association schemes of class $s$. We prove the following bounds. 
 
Let $X$ be a spherical $s$-distance set with strength $2s-i$ where $2 \leq i \leq s+1$. Then, 
\begin{equation} \label{design_intro}
|X| \leq \sum_{k=0}^s h_k-h_{s-i+1}. 
\end{equation}
When $s=2$ and $i=2$, this bound coincides with $|X| \leq h_0+h_2$. 

Let $(X,\mathcal{R})$ be an $s$-class $Q$-polynomial scheme with respect to the ordering $E_0,E_1,\ldots,E_s$.
Define $l=\max\{k\in\{0,1,\ldots,s\}\mid q_{1,0}^0=q_{1,1}^1=\cdots=q_{1,k}^k=0\}$.
If $(s-1)/2\leq l\leq s-1$ holds, then 
\begin{equation} \label{ass_intro_1} 
|X|\leq \sum_{i=0}^{s} h_{i,m_1} - \underset{i \equiv s-1 \pmod 2}{\sum_{i = 2l-s+3}^{s-1}} h_{i,m_1}.
\end{equation}
If $l\leq (s-2)/2$ holds, then  
\begin{equation} \label{ass_intro_2}
|X|\leq \underset{i\equiv s \pmod 2}{\sum_{i=0}^s} h_{i,m_1}.
\end{equation}
When $s=2$ and $l=0$, this bound coincides with $|X| \leq h_{0,m_1}+h_{2,m_1}$. 
 
We have an example attaining the above bounds. 
There exists a finite subset $X$ of the minimum vectors of the Leech lattice in $\mathbb{R}^{24}$, such that 
$|X|=2025$, $X$ is in a $22$-dimensional affine subspace, and $X$ is a spherical $3$-distance set with strength $4$
 in $S^{21}$ after rescaling the norm to $1$. 
Then $|X|=h_0+h_1+h_3$, and hence $X$ attains the bound (\ref{design_intro}). 
The finite set $X$ carries a $Q$-polynomial association scheme of class $3$ because $t=2s-2$ holds. 
Then, $l=1$ and $X$ attains the bound (\ref{ass_intro_1}).  
 
We also prove a similar upper bound for antipodal spherical $s$-distance sets with strength $t$. 
The vertex set of the dodecahedron is an antipodal spherical $5$-distance set with strength $5$ in $S^2$, and attains the upper bound.    

\section{Preliminaries}
	
	\subsection{Two-point-homogeneous spaces}
	In this section, we introduce the concept of two-point-homogeneous spaces \cite[Chapter 9]{Conway-Slaone}, \cite{Kabatyansky-Levenshtein}. 
	
	Let $M$ be a compact metric space with a distance $\rho$ on it, and $\tau$ is the certain 
	function in $\rho$ ({\it i.e.} $\tau(x,y)=F(\rho(x,y))$). 
	We call $M$ a two-point-homogeneous space if there exists a group $G$ acting on $M$ such that the following assumption hold: For any $x,x',y,y' \in M$, we have $\tau(x,y)=\tau(x',y')$ if and only if there is an element $g \in G$ such that $g(x)=x'$ and $g(y)=y'$.  
	
	Let $\mu$ be the Haar measure, which is invariant under $G$. We assume that $\mu$ is normalized so that $\mu(M)=1$.
	Let $L^2(M)$ denote the vector space of complex-valued functions $u$ on $M$, satisfying 
	\[
	\int_M |u(x)|^2 d\mu(x) < \infty
	\]
	with inner product 
	\[
	(u_1,u_2)=\int_M u_1(x) \overline{u_2(x)} d\mu(x). 
	\]
	The space $L^2(M)$ decomposes into a countable direct sum of mutually orthogonal subspaces $\{ V_k \}_{k=0,1,\ldots}$ called (generalized) spherical harmonics. 
	Let $\{ \phi_{k,i} \}_{i=1}^{h_k}$ be an orthonormal basis for $V_k$, where $h_k= \dim V_k$. 
	Since $M$ is distance transitive, the function 
	\[
	\Phi_k(x,y):=\sum_{i=1}^{h_k} \phi_{k,i}(x) \overline{\phi_{k,i}(y)}
	\]
	depends only on $\tau(x,y)$. 
	This expression is called the addition formula, and $\Phi_k(\tau)$ is called the zonal spherical function associated with $V_k$.  It is immediate from the definition that $\Phi_k$ is positive definite, that is, 
	\[
	\sum_{x \in X} \sum_{y\in X}\Phi_k(\tau(x,y)) \geq 0
	\]
	for any $X \subset M$.
	Throughout this paper, we assume that $\{\Phi_i \}$ forms a family of orthogonal polynomials. 
Remark that for all known two-point-homogeneous spaces, $\Phi_i$ are polynomials. 
 We suppose that the degree of $\Phi_k$ is $k$. Note that $\Phi_k(\tau_0)=h_k$, where $\tau_0=\tau(x,x)$ for $x \in X$.

	\begin{example}
	The unit sphere $S^{m-1} \subset \mathbb{R}^m$ is a two-point-homogeneous space. 
	A polynomial with $m$ variables is said to be harmonic if its Laplacian is equal to zero. 
	Then, $V_k$ is the linear space of all homogeneous harmonic polynomials of degree $k$,
 with $m$ variables, and $h_k=\binom{m+k-1}{k} - \binom{m+k-3}{k-2}$. 
	The polynomial $\Phi_k$ is the Gegenbauer polynomial $G_k(x)$. 
	The Gegenbauer polynomials $G_k$ are defined by the following manner: 
	\[
	x G_k(x)=\lambda_{k+1} G_{k+1}(x)+(1-\lambda_{k-1})G_{k-1}(x)
	\]
	where $\lambda_k=k/(m+2k-2)$, $G_0(x) \equiv 1$, and $G_1(x)= mx$. 
	\end{example}
	
\subsection{Distance sets and $t$-designs}
	Let $M$ be a two-point-homogeneous space.
	We define 
	\[
	A(X)=\{\tau(x,y) \mid x,y \in X, x\ne y \}
	\]
	for a finite set $X$ in $M$. 
	A finite $X \subset M$ is called an $s$-distance set (or $s$-code) if $|A(X)|=s$. 
	If $X$ is an $s$-distnace set, then $|X|\leq \sum_{i=0}^s h_i$ \cite{Delsarte-Goethals-Seidel}.

	A finite $X\subset M$ is called a $t$-design if 
	\[
	\sum_{x,y \in X} \Phi_i(\tau(x,y))=0
	\]
	for any $1\leq i \leq t$. 
	We say that $X$ has strength $t$ if $X$ is a $t$-design but not a $(t+1)$-design. 
	It is proved by the same method in \cite{Delsarte-Goethals-Seidel} that if $X$ is a 2e-design, then $|X| \geq \sum_{i=0}^e h_i$.
	
	Let $\{\phi_{k,i} \}_{i=1}^{h_k}$ be an orthonormal basis of $V_k$. 
	Let $H_k$ be the matrix whose $(i,j)$-entry is $\phi_{k,j}(x_i)$, where $X=\{x_1,x_2,\ldots, x_n\} \subset M$. The matrix $H_k$ is called the characteristic matrix of 
	degree $k$.

	The following are needed later. 
	\begin{theorem}[\cite{Koornwinder}] \label{koon}
	The product of any two zonal spherical functions $\Phi_i(t)$ and $\Phi_j(t)$ can be expressed as 
	\[
	\Phi_i(t) \Phi_j(t)=\sum_{k=0}^{i+j} c_{i,j}^k \Phi_k(t)
	\]
	with $c_{i,j}^k \geq 0$ and $c_{i,j}^0=h_i \delta_{i,j}$. 
	\end{theorem}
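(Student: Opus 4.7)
The plan is to derive the expansion of $\Phi_i\Phi_j$ by first writing it in the basis of zonal spherical functions, and then to compute each coefficient as an $L^2(M\times M)$ inner product that is manifestly a sum of squared moduli.

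First I would note that $\Phi_i\Phi_j$ is a polynomial in $\tau$ of degree at most $i+j$, and since $\Phi_0,\Phi_1,\ldots,\Phi_{i+j}$ are orthogonal polynomials of degrees $0,1,\ldots,i+j$, they form a basis of this polynomial space. Hence the expansion $\Phi_i\Phi_j=\sum_{k=0}^{i+j} c_{i,j}^k\Phi_k$ exists and is unique; the work is in identifying each coefficient and its sign.

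Next I would establish the orthogonality relation
\[
\int_{M\times M}\Phi_k(\tau(x,y))\overline{\Phi_l(\tau(x,y))}\,d\mu(x)d\mu(y)=h_k\delta_{k,l},
\]
by expanding both factors via the addition formula and invoking orthonormality of $\{\phi_{k,r}\}_r$ in $L^2(M)$ together with Fubini. This reads off
\[
c_{i,j}^k=\frac{1}{h_k}\int_{M\times M}\Phi_i\Phi_j\overline{\Phi_k}\,d\mu\, d\mu.
\]
Substituting the addition formula for each of $\Phi_i$, $\Phi_j$, $\overline{\Phi_k}$ inside this integral and separating the $x$- and $y$-integrations by Fubini, the $y$-integral becomes the complex conjugate of the $x$-integral, so the whole expression collapses to
\[
h_k\, c_{i,j}^k=\sum_{p=1}^{h_i}\sum_{q=1}^{h_j}\sum_{r=1}^{h_k}\left|\int_M \phi_{i,p}(x)\phi_{j,q}(x)\overline{\phi_{k,r}(x)}\,d\mu(x)\right|^2\ge 0.
\]
For $k=0$ the claim $c_{i,j}^0=h_i\delta_{i,j}$ is then automatic: $V_0$ consists of the constants, so $h_0=1$ and $\Phi_0\equiv 1$, reducing the coefficient formula to $c_{i,j}^0=\int\Phi_i\Phi_j\,d\mu\,d\mu$, which equals $h_i\delta_{i,j}$ by the orthogonality relation above (using that $\Phi_j$ is real, as it is an orthogonal polynomial with real coefficients in the standard two-point-homogeneous examples).

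The main obstacle, as I see it, is the bookkeeping in the sum-of-squares step: after substituting three copies of the addition formula and interchanging summation with integration, one must verify carefully that the $x$- and $y$-integrals are exact complex conjugates so that the expression really does become a nonnegative sum, rather than a sum of products with uncontrolled signs. Once the conjugation is handled cleanly, both positivity of $c_{i,j}^k$ and the normalization $c_{i,j}^0=h_i\delta_{i,j}$ emerge as two facets of the same identity.
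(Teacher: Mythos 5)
Your argument is correct: the orthogonality relation $\int_{M\times M}\Phi_k\overline{\Phi_l}\,d\mu\,d\mu=h_k\delta_{k,l}$, the coefficient formula, and the collapse of the triple substitution of the addition formula into $\sum_{p,q,r}\bigl|\int_M\phi_{i,p}\phi_{j,q}\overline{\phi_{k,r}}\,d\mu\bigr|^2\geq 0$ all go through (the conjugation worry is harmless since $\tau$ is symmetric, so each $\Phi_k(\tau(x,y))$ is real and the $y$-integral is indeed the conjugate of the $x$-integral). The paper itself offers no proof — it cites Koornwinder — and your addition-formula, sum-of-squares argument is essentially the standard proof behind that cited result, so there is nothing to correct.
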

	\begin{lemma} \label{f_0}
	Let $F(t)= \sum_{k}f_k \Phi_k(t)$, and $G(t)=\Phi_l(t) F(t)/ h_l=\sum_{k}g_k \Phi_k(t)$. 
	Then $g_0=f_l$. 
	\end{lemma}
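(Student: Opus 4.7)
The plan is to compute $g_0$ directly from the definition of $G$ by expanding the product $\Phi_l F$ term by term using Theorem \ref{koon} (Koornwinder's product formula) and then extracting the constant-term coefficient. This is essentially a linear-algebra computation in the basis $\{\Phi_k\}$, and no structural idea beyond the two identities stated in Theorem \ref{koon} is needed.

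Concretely, I would first write
\[
G(t) = \frac{1}{h_l}\,\Phi_l(t)\,F(t) = \frac{1}{h_l}\sum_k f_k\,\Phi_l(t)\Phi_k(t),
\]
then apply the product formula $\Phi_l(t)\Phi_k(t) = \sum_{j=0}^{l+k} c_{l,k}^j\,\Phi_j(t)$ to each summand, obtaining
\[
G(t) = \frac{1}{h_l}\sum_k f_k \sum_{j\geq 0} c_{l,k}^j\,\Phi_j(t).
\]
Matching the coefficient of $\Phi_0(t)$ on both sides (using that $\{\Phi_k\}$ is a basis, so coefficients in the $\Phi_k$-expansion are unique) yields
\[
g_0 = \frac{1}{h_l}\sum_k f_k\,c_{l,k}^0.
\]
Finally I would substitute the second part of Theorem \ref{koon}, namely $c_{l,k}^0 = h_l\,\delta_{l,k}$, which collapses the sum to $g_0 = (1/h_l)\cdot f_l\cdot h_l = f_l$, as required.

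There is no real obstacle here: the only subtlety is justifying that reading off the $\Phi_0$-coefficient is legitimate, which follows from the standing assumption that the $\Phi_i$ are orthogonal polynomials (so in particular linearly independent). The lemma is a bookkeeping identity packaging the fact that in the multiplication table of the $\Phi_k$, the constant term $c_{l,k}^0$ is diagonal with value $h_l$, which is precisely the content extracted from Theorem \ref{koon}.
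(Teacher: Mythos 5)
Your proposal is correct and follows exactly the paper's own argument: expand $\Phi_l F$ via Theorem \ref{koon} and read off the $\Phi_0$-coefficient using $c_{l,k}^0 = h_l\,\delta_{l,k}$. No differences worth noting.
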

	\begin{proof}
	By Theorem \ref{koon}, 
	\[
	\sum_{k}g_k \Phi_k(t)=\frac{\Phi_l(t)}{h_l} \sum_{k}f_k \Phi_k(t) =\frac{1}{h_l} \sum_{k}f_k 
	\sum_{i=0}^{k+l} c_{l,k}^i \Phi_i(t).
	\]
	Since $c_{i,j}^0=h_i \delta_{i,j}$, this lemma follows. 
	\end{proof}
	Define $||N||^2 = \sum_{i,j} n_{i,j}^2$, where $n_{i,j}$ is the $(i,j)$-entry of a matrix $N$.
	Let $^tN$ be the transpose of a matrix $N$. 
	\begin{theorem}[\cite{Delsarte-Goethals-Seidel}] \label{ani}
	Let $X \subset M$ and $F(t)= \sum_{k=0}^{\infty} f_k \Phi_k(t)$. If $F(\alpha)=0$ for any $\alpha \in
	A(X)$ and $F(\tau_0)=1$, then 
	\[
	|X|(1-|X|f_0)= \sum_{k=1}^{\infty} f_k ||{^tH_k}H_0||^2. 
	\]
	\end{theorem}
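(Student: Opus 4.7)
The plan is to evaluate the double sum $S := \sum_{x,y \in X} F(\tau(x,y))$ in two different ways and equate the results. For the first evaluation, I would split into diagonal and off-diagonal contributions. When $x = y$, by hypothesis $F(\tau(x,x)) = F(\tau_0) = 1$, contributing $|X|$ in total. When $x \neq y$, we have $\tau(x,y) \in A(X)$, so $F(\tau(x,y)) = 0$. Hence $S = |X|$.

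For the second evaluation, I would substitute the expansion $F(t) = \sum_{k=0}^{\infty} f_k \Phi_k(t)$ and invoke the addition formula $\Phi_k(\tau(x,y)) = \sum_{i=1}^{h_k} \phi_{k,i}(x) \overline{\phi_{k,i}(y)}$. Interchanging the order of summation yields
\[
\sum_{x,y \in X} \Phi_k(\tau(x,y)) = \sum_{i=1}^{h_k} \left|\sum_{x \in X} \phi_{k,i}(x)\right|^2.
\]
To recognize this quantity as $||{}^tH_k H_0||^2$, I would use that $V_0$ is spanned by the constant function $\phi_{0,1} \equiv 1$ (thanks to the normalization $\mu(M) = 1$), so that $H_0$ is the $|X| \times 1$ all-ones column. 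Consequently $({}^tH_k H_0)_{j,1} = \sum_{x \in X} \phi_{k,j}(x)$, matching the expression above.

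Combining the two evaluations gives $|X| = \sum_{k=0}^{\infty} f_k \, ||{}^tH_k H_0||^2$. Isolating the $k=0$ term, for which ${}^tH_0 H_0 = [\,|X|\,]$ and hence $||{}^tH_0 H_0||^2 = |X|^2$, and then rearranging, produces the stated identity. There is no serious obstacle in this argument: it is essentially a double count. The only points requiring care are the identification of $H_0$ with the all-ones column (which rests on the normalization of the Haar measure) and, if one works over $\mathbb{C}$, the observation that the resulting identity is real so that complex conjugation does not affect the conclusion.
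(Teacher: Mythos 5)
Your argument is correct: the double count of $\sum_{x,y\in X}F(\tau(x,y))$, combined with the addition formula and the identification of $H_0$ with the all-ones column (valid because $\mu(M)=1$ makes $\phi_{0,1}\equiv 1$ an orthonormal basis of $V_0$), is precisely the classical Delsarte--Goethals--Seidel proof, which the paper cites without reproducing. The only point left implicit is the termwise interchange of the (possibly infinite) expansion of $F$ with the finite sum over $X$, which is harmless under the pointwise convergence already assumed in writing $F(t)=\sum_k f_k\Phi_k(t)$.
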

	\begin{theorem}[\cite{Delsarte-Goethals-Seidel}] \label{design}
	If $X$ is a $t$-design in $M$,  
	then  for nonnegative integers $k,l$ such that $k+l\leq t$,
	\[ ^tH_kH_l =|X| \Delta_{k,l}, \] 
	where $\Delta_{k,l}$ is the identity matrix if $k=l$, and $\Delta_{k,l}$ is the zero matrix if $k\ne l$. 
	\end{theorem}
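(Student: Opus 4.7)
The plan is to translate the matrix identity into the entry-wise statement
\[
\bigl({^tH_k}H_l\bigr)_{i,j}=\sum_{x\in X}\phi_{k,i}(x)\phi_{l,j}(x),
\]
and to show the right-hand side equals $|X|\delta_{k,l}\delta_{i,j}$ whenever $k+l\le t$. The strategy is to expand the product $\phi_{k,i}\phi_{l,j}$ in the orthogonal decomposition $L^2(M)=\bigoplus_{r\ge 0}V_r$ and use the $t$-design hypothesis to kill every component except the constant one.

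First I would reformulate the definition of a $t$-design in linear-functional form. The addition formula gives
\[
\sum_{x,y\in X}\Phi_r(\tau(x,y))=\sum_{i=1}^{h_r}\Bigl|\sum_{x\in X}\phi_{r,i}(x)\Bigr|^2,
\]
so the vanishing of the left-hand side for $1\le r\le t$ is equivalent to $\sum_{x\in X}\phi(x)=0$ for every $\phi\in V_r$ with $1\le r\le t$. This is the usable form of the design hypothesis.

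Next I would invoke the analogue of Theorem \ref{koon} for non-zonal harmonics: the pointwise product $\phi_{k,i}\phi_{l,j}$ lies in $V_0\oplus V_1\oplus\cdots\oplus V_{k+l}$. This follows from the Clebsch--Gordan type decomposition of $V_k\otimes V_l$ in the two-point-homogeneous setting (for the sphere it is immediate because the product is a polynomial of degree $k+l$, which splits into harmonics of degrees $k+l,k+l-2,\ldots$). Writing $\phi_{k,i}\phi_{l,j}=\sum_{r=0}^{k+l}g_r$ with $g_r\in V_r$, the design condition $k+l\le t$ kills every summand with $r\ge 1$, giving
\[
\sum_{x\in X}\phi_{k,i}(x)\phi_{l,j}(x)=|X|\,g_0.
\]
Finally I would evaluate the constant $g_0$ by integration: since each $V_r$ with $r\ge 1$ is orthogonal to $V_0$, we have $g_0=\int_M\phi_{k,i}\phi_{l,j}\,d\mu$, which equals $\delta_{k,l}\delta_{i,j}$ by orthonormality of $\{\phi_{k,i}\}_i$ inside $V_k$ and the orthogonality $V_k\perp V_l$ for $k\ne l$. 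Putting the three steps together yields $({^tH_k}H_l)_{i,j}=|X|\delta_{k,l}\delta_{i,j}$, which is the claimed matrix identity.

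The main obstacle is the second step: justifying that $\phi_{k,i}\phi_{l,j}$ has no $V_r$-component with $r>k+l$ in the abstract two-point-homogeneous framework. Theorem \ref{koon} as stated only handles the zonal case $\Phi_i\Phi_j$, so one must appeal to the parallel fact that, as $G$-representations, $V_k\otimes V_l$ decomposes into irreducibles appearing already among $V_0,\ldots,V_{k+l}$; for the specific spaces of interest (spheres, projective spaces) this is classical.
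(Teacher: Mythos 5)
The paper itself gives no proof of Theorem \ref{design}; it is imported from Delsarte--Goethals--Seidel, and your argument is exactly the standard DGS proof (rewrite the design condition as $\sum_{x\in X}\phi(x)=0$ for all $\phi\in V_r$, $1\le r\le t$, via the addition formula and the sum-of-squares identity; expand $\phi_{k,i}\phi_{l,j}$ into components from $V_0,\dots,V_{k+l}$ with $k+l\le t$; evaluate the surviving constant term by orthonormality), and it is correct. The step you flag is indeed the only point requiring care beyond the sphere: it does not follow from Theorem \ref{koon}, which only treats zonal functions with a common pole, but it is classical for every concrete compact two-point-homogeneous space (where $\bigoplus_{r\le e}V_r$ is the restriction of ambient polynomials of degree $\le e$, so degrees add under pointwise products) and is in effect part of the paper's standing polynomial-structure assumptions; the only other nitpick is that since the paper allows complex-valued $V_k$ one should take a real orthonormal basis (or insert conjugates) so that $\int_M\phi_{k,i}\phi_{l,j}\,d\mu=\delta_{k,l}\delta_{i,j}$ matches the entries of ${}^tH_kH_l$.
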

	We define \[F_X(t):=\prod_{\alpha \in A(X)} \frac{t- \alpha}{\tau_0-\alpha}. \] 
\begin{lemma} \label{1/|X|}
Let $X$ be an $s$-distance set with strength $t$ in $M$. 
We have $F_X(t)=\sum_{k=0}^s f_k \Phi_k(t)$. 
If $t \geq s-1$, then $f_{t-s+1}\ne 1/|X|$. 
\end{lemma}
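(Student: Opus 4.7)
The plan is to apply Theorem \ref{ani} not to $F_X$ itself, but to the auxiliary annihilator $G(\xi):=\Phi_l(\xi)F_X(\xi)/h_l$ with $l:=t-s+1$, exactly so that Lemma \ref{f_0} converts the coefficient $f_{t-s+1}$ into the constant term $g_0$ of $G$ in the $\Phi$-expansion. The hypothesis $t\geq s-1$ guarantees $l\geq 0$, so this is legitimate.

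First I would verify that $G$ is admissible for Theorem \ref{ani}. Since $F_X$ has degree $s$ (as $|A(X)|=s$) and vanishes on $A(X)$, the product $G$ has degree $s+l=t+1$ and still satisfies $G(\alpha)=0$ for all $\alpha\in A(X)$. Moreover $G(\tau_0)=\Phi_l(\tau_0)F_X(\tau_0)/h_l=h_l\cdot 1/h_l=1$, using $\Phi_l(\tau_0)=h_l$ and $F_X(\tau_0)=1$. Expand $G(\xi)=\sum_{k=0}^{t+1}g_k\Phi_k(\xi)$. By Lemma \ref{f_0}, $g_0=f_{l}=f_{t-s+1}$, which is the quantity of interest.

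Now I would feed $G$ into Theorem \ref{ani}:
\[
|X|\bigl(1-|X|g_0\bigr)=\sum_{k=1}^{t+1} g_k\,\|{}^tH_kH_0\|^2.
\]
Because $X$ has strength $t$, Theorem \ref{design} yields ${}^tH_kH_0=0$ for $1\leq k\leq t$ (take $l=0$ in that theorem, noting $k\neq 0$). Hence all terms with $1\leq k\leq t$ vanish, and the whole right-hand side collapses to the single term $g_{t+1}\,\|{}^tH_{t+1}H_0\|^2$. Therefore it suffices to show this term is nonzero, because then $1-|X|g_0\neq 0$, i.e.\ $f_{t-s+1}=g_0\neq 1/|X|$.

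The two factors are nonzero for independent reasons, and this is the step that deserves the most care. For $\|{}^tH_{t+1}H_0\|^2$: since ${}^tH_{t+1}H_0$ is the column vector of sums $\sum_{x\in X}\phi_{t+1,i}(x)$, its vanishing would make $X$ a $(t+1)$-design, contradicting the assumption that the strength is exactly $t$. For $g_{t+1}$: using Theorem \ref{koon} to expand $G=\frac{1}{h_l}\sum_{k=0}^s f_k\sum_{i=0}^{k+l}c_{l,k}^{i}\Phi_i$, the coefficient $g_{t+1}$ collects only the top-degree contribution, namely $k=s$, $i=l+s=t+1$, giving $g_{t+1}=f_s\,c_{l,s}^{l+s}/h_l$. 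Here $f_s$ is the leading coefficient of $F_X$ in the $\Phi$-basis, which is nonzero because $F_X$ has exact degree $s$, and $c_{l,s}^{l+s}$ is nonzero since it is the ratio of leading monomial coefficients of $\Phi_l\Phi_s$ and $\Phi_{l+s}$. Combining these two nonvanishings closes the argument.
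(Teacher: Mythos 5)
Your proposal is correct and follows essentially the same route as the paper: multiply $F_X$ by $\Phi_{t-s+1}/h_{t-s+1}$, apply Theorem \ref{ani} and Theorem \ref{design} to collapse the sum to the single term $g_{t+1}\|{}^tH_{t+1}H_0\|^2$, and use the failure of $(t+1)$-design together with Lemma \ref{f_0}. The only difference is that you explicitly justify $g_{t+1}\neq 0$ via the leading coefficient $f_s c_{l,s}^{l+s}/h_l$, a point the paper simply asserts, so your write-up is if anything slightly more complete.
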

\begin{proof}
Let $G(t)=\Phi_{t-s+1}(t) F_X(t)/h_{t-s+1}=\sum_{k=0}^{t+1} g_k \Phi_k(t)$. 
By Theorem \ref{ani}, 
	\[
	|X|(1-|X|g_0)= \sum_{k=1}^{t+1} g_k ||{^tH_k}H_0||^2, 
	\]
	where $g_{t+1} \ne 0$. 
By Theorem \ref{design}, $|X|(1-|X|g_0)= g_{t+1} ||{^tH_{t+1}}H_0||^2$. 
Since $X$ is not a $(t+1)$-design, $||{^tH_{t+1}}H_0||^2 \ne 0$ and hence $1-|X|g_0\ne 0$. 
By Lemma \ref{f_0}, $f_{t-s+1}=g_0$ and $f_{t-s+1}\ne 1/|X|$.
\end{proof}
	

	\subsection{Cometric association schemes} 
	Let $X$ be a finite set and $\mathcal{R}=\{R_0,R_1,\ldots,R_s\}$ be a set of non-empty subsets of $X\times X$.
For $0\leq i\leq s$, let $A_i$ be the $(0,1)$-matrix indexed by the elements of $X$, whose $(x,y)$-entry is $1$ if $(x,y)\in R_i$, and $0$ otherwise.
The matrix $A_i$ is called the adjacency matrix of the graph $(X,R_i)$.
A pair $(X,\mathcal{R})$ is a symmetric association scheme of class $s$ if the following hold:
\begin{enumerate}
\item $A_0 $ is the identity matrix;
\item $\sum_{i=0}^sA_i=J$, where $J$ is the all one matrix;
\item ${}^t A_i=A_i$ for $1\leq i\leq s$;
\item $A_iA_j$ is a linear combination of $A_0,A_1,\ldots,A_s$ for $0\leq i,j\leq s$.
\end{enumerate}
The vector space $\mathcal{A}$ spanned by $A_i$ over the real field $\mathbb{R}$ is an algebra which is called the Bose-Mesner algebra of $(X,\mathcal{R})$.
Since $\mathcal{A}$ is semi-simple and commutative, there exist primitive, mutually orthogonal idempotents $\{E_0,E_1,\ldots,E_s\}$ where $E_0=\frac{1}{|X|}J$.
Let $m_i$ be the rank of $E_i$ for $0\leq i\leq s$.
Since $\sum_{i=0}^sE_i=I$ and $\{E_0,E_1,\ldots,E_s\}$ are mutually orthogonal idempotents, 
\begin{align}\label{S5}
\sum_{i=0}^sm_i=|X|.
\end{align}
Since $\mathcal{A}$ is closed under the entry-wise product $\circ$, we define the Krein parameters $q_{i,j}^k$ by 
\begin{equation} E_i\circ E_j=\frac{1}{|X|}\sum_{k=0}^s q_{i,j}^k E_k \label{S:eq}. \end{equation}
The Krein parameters are nonnegative real numbers \cite[Theorem~3.8]{BI} \cite{Scott}. 
The scheme $(X,\mathcal{R})$ is $Q$-polynomial (or cometric) with respect to the ordering $E_0,$ $E_1,\ldots,E_s$ if the following hold:
$q_{1,j}^k>0$ if $k=j\pm1$ and $q_{1,j}^k=0$ if $k<j-1$ or $j+1<k$.
If $(X,\mathcal{R})$ is $Q$-polynomial, for notational convenience, set $a_i^*=q_{1,i}^i$ $(0\leq i\leq s)$, $b_i^*=q_{1,i+1}^i (0\leq i\leq s-1)$, and $c_i^*=q_{1,i-1}^i, (1\leq i\leq s)$, $c_0^*=b_s^*=0$, 
we abbreviate $m_1$ as $m$.
The matrix $B_1^*=(q_{1,j}^k)_{0\leq j,k \leq s}$ is said to be the Krein matrix.
It follows from \cite[Proposition~3.7]{BI} that $a_0^*=0$, $c_1^*=1$ and
\begin{align}
& a_i^*+b_i^*+c_i^*=m \text{ for } 0\leq i\leq s, \label{S4}\\
& b_i^*m_i=c_{i+1}^*m_{i+1} \text{ for } 0\leq i\leq s-1. \label{S5}
\end{align}
Then (\ref{S:eq}) gives the following three-term recurrence:
$E_1\circ E_i=\frac{1}{|X|}(c_{i+1}^*E_{i+1}+a_i^*E_i+b_{i-1}^*E_{i-1}) \text{ for }1\leq i\leq s-1$. 
Denote $A^{\circ i}$ by the product $A\circ A\circ \cdots \circ A$ to $i$ factors.
The following lemma is used in a proof of Theorem~\ref{S0}:
\begin{lemma}[{\cite[Lemma~4.7]{BI}}] \label{S3}
	Let $A$ be a square matrix of rank $m$ over real field $\mathbb{R}$. 
	The following hold:
	\begin{enumerate}
    \item $\rank A^{\circ h}\leq \binom{m+h-1}{h}$ for any nonnegative integer $h$.
    \item If the equality in $(1)$ holds for some $h$, then the equality in $(1)$ holds for any $j\leq h$.
    \end{enumerate}
\begin{proof}
(1): 
Let $\{a_1,\ldots,a_m\}$ be a basis of the vector space spanned by the rows of $A$.
Since the row space of $A^{\circ h}$ are spanned by a set $\{a_{i_1}\circ\cdots\circ a_{i_h}\mid 1\leq i_1\leq \ldots\leq i_h\leq m\}$, 
the desired result follows.\\
(2): 
Suppose that $\rank {A^{\circ h}}=\binom{m+h-1}{h}$ for some nonnegative integer $h$.
This is equivalent that the set $\{a_{i_1}\circ\cdots\circ a_{i_h}\mid 1\leq i_1\leq \ldots\leq i_h\leq m\}$ is linearly independent.
Then a set $\{a_{i_1}\circ\cdots\circ a_{i_j}\mid 1\leq i_1\leq \ldots\leq i_j\leq m\}$ is also linearly independent for any $j\leq h$.
Indeed, for $j\leq h$, assume $\sum c_{i_1,\ldots,i_j}a_{i_1}\circ\cdots\circ a_{i_j}=0$ for some $c_{i_1,\ldots,i_j}\in\mathbb{R}$, where indices run through $1\leq i_1\leq \ldots\leq i_j\leq m$.
Multiplying $a_1^{\circ(h-j)}$, we have $\sum c_{i_1,\ldots,i_j}a_1^{\circ (h-j)}\circ a_{i_1}\circ\cdots\circ a_{i_j}=0$.
Since the set $\{a_{i_1}\circ\cdots\circ a_{i_h}\mid 1\leq i_1\leq \ldots\leq i_h\leq m\}$ is linear independent, $c_{i_1,\ldots,i_j}=0$ for all indices.
Therefore the desired result is proved.
\end{proof}
\end{lemma}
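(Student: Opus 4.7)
The plan is to reduce both claims to counting the number of distinct multiset Hadamard products formed from a basis of the row space of $A$. Fix a basis $a_1,\ldots,a_m$ of the row space of $A$; then each row of $A$ is an $\mathbb{R}$-linear combination of the $a_i$, and each row of $A^{\circ h}$ is an $h$-fold entry-wise product of rows of $A$. Expanding by multilinearity of $\circ$, every such row lies in the span of the vectors $a_{i_1}\circ \cdots \circ a_{i_h}$. Because $\circ$ is commutative, we may index these products by multisets $1\leq i_1\leq \cdots \leq i_h\leq m$, of which there are $\binom{m+h-1}{h}$. This yields part (1) directly.

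For part (2), I would note that rank equality at $h$ is equivalent to the multiset-indexed family $\{a_{i_1}\circ\cdots\circ a_{i_h}\mid 1\leq i_1\leq\cdots\leq i_h\leq m\}$ being linearly independent, and then show that linear independence at level $h$ forces linear independence at every level $j\leq h$. Given a hypothetical vanishing relation $\sum c_{i_1,\ldots,i_j}\, a_{i_1}\circ\cdots\circ a_{i_j}=0$ with $j\leq h$, I would Hadamard-multiply through by the fixed vector $a_1^{\circ(h-j)}$. This produces a vanishing combination at level $h$ whose indexing multisets are $\{\underbrace{1,\ldots,1}_{h-j},\,i_1,\ldots,i_j\}$, and distinct input multisets yield distinct output multisets (appending $h-j$ copies of the index $1$ is injective on multisets). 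Independence at level $h$ then forces every $c_{i_1,\ldots,i_j}=0$, whence the family at level $j$ is independent and the rank bound is attained.

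The main obstacle, such as it is, is a bookkeeping one: one must verify that the multiset-indexed products really span the row space of $A^{\circ h}$ (rather than merely being a subset of a larger spanning set of $m^h$ ordered products) and that the lifting map on multisets is injective. Both points rest on commutativity of $\circ$ and are essential to obtain the sharp binomial bound instead of a crude $m^h$ estimate. Beyond that, no structural hypothesis on $A$ is used beyond the value of its rank, so the argument is uniform in $h$ and $A$.
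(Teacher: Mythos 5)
Your proof follows the paper's argument essentially verbatim: part (1) by expanding the rows of $A^{\circ h}$ over a basis $a_1,\ldots,a_m$ of the row space of $A$ and counting the multiset products $a_{i_1}\circ\cdots\circ a_{i_h}$, and part (2) by translating rank equality at level $h$ into linear independence of these products and lifting a hypothetical dependence at level $j$ to level $h$ via Hadamard multiplication by $a_1^{\circ(h-j)}$. The only difference is cosmetic --- you make explicit the injectivity of appending $h-j$ copies of the index $1$ to a multiset, which the paper leaves implicit --- so the two proofs coincide.
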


We can consider the embedding of a $Q$-polynomial association scheme into the unit sphere as follows.
Since the primitive idempotent $E_1$ is positive semi-definite, 
there exists a $|X|$ times $m$ matrix $U$ of rank $m$ such that $\frac{|X|}{m}E_1=U{}^tU$.
Since $E_1$ has no repeated rows, $U$ has also no repeated rows.
Corresponding $x$ in $X$ to the $x$-th row vector of $U$, we identify $X$ as the row vectors $U$.
Then $X$ is always a spherical $2$-design.
$X$ is a spherical $3$-design if and only if $a_1^*=q_{1,1}^1=0$.
Further, we have a characterization for $X$ to be a spherical $t$-design in terms of the Krein parameters as follows:
\begin{theorem}[{\cite[Theorem~3.1]{S}}] \label{SHO}
Let $(X,\mathcal{R})$ be a $Q$-polynomial scheme.
Then the following are equivalent:
\begin{enumerate}
\item $X$ is a spherical $t$-design.
\item $a_i^*=0$ for $0\leq i\leq \lfloor (t-1)/2\rfloor$ and $c_j^*=\frac{mj}{m+2j-2}$ for $0\leq j\leq \lceil (t-1)/2\rceil$.
\end{enumerate}
\end{theorem}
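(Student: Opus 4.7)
The plan is to recast the $t$-design condition as a vanishing of certain expansion coefficients, and then extract the Krein parameter conditions by comparing the Gegenbauer three-term recurrence with the one arising from the $Q$-polynomial structure. Set $B=\frac{|X|}{m}E_1$, so that $B$ is the Gram matrix of the spherical embedding (with $B_{x,y}$ equal to the inner product $(x,y)$). Since $B\in\mathrm{span}(E_0,E_1)$ and the $Q$-polynomial property implies $B^{\circ k}\in\mathrm{span}(E_0,\ldots,E_k)$, one may write $G_k(B)=\sum_{j=0}^{k}\gamma_{k,j}E_j$, where $G_k$ is applied entrywise. By the addition formula, $G_k(B)=H_k\,{}^tH_k$ entrywise; combined with ${}^t\mathbf{1}\,E_j\,\mathbf{1}=|X|\delta_{j,0}$, this shows that the spherical $t$-design condition $\sum_{x,y\in X}G_k((x,y))=0$ for $1\le k\le t$ is equivalent to $\gamma_{k,0}=0$ for $1\le k\le t$.

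Next, apply the Gegenbauer recursion $xG_k(x)=\lambda_{k+1}G_{k+1}(x)+(1-\lambda_{k-1})G_{k-1}(x)$ entrywise to $B$ and substitute $E_1\circ E_j=\frac{1}{|X|}(c_{j+1}^*E_{j+1}+a_j^*E_j+b_{j-1}^*E_{j-1})$ on the left-hand side. Matching coefficients of $E_l$ then yields the master recurrence
\[
c_l^*\gamma_{k,l-1}+a_l^*\gamma_{k,l}+b_l^*\gamma_{k,l+1}=m\lambda_{k+1}\gamma_{k+1,l}+m(1-\lambda_{k-1})\gamma_{k-1,l},
\]
with initial data $\gamma_{0,0}=\gamma_{1,1}=|X|$ and all other entries with $k\le 1$ equal to zero.

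The core argument is an induction on $t$ that descends along the anti-diagonals $k+j=\mathrm{const}$. Specializing the recurrence to $l=0$ (using $c_0^*=a_0^*=0$, $b_0^*=m$) and invoking the inductive hypothesis $\gamma_{j,0}=0$ for $j<k$ gives $\gamma_{k,0}=0\iff\gamma_{k-1,1}=0$; iterating with $l=1,2,\dots$ then reduces $\gamma_{k-1,1}=0$ successively to $\gamma_{k-2,2}=0$, $\gamma_{k-3,3}=0$, and so on. At each reduction step the Krein conditions already established (from the design property at smaller strengths) annihilate the off-descent terms, so the descent terminates with one equation which vanishes precisely when the next Krein condition holds: $a_{(t-1)/2}^*=0$ when $t$ is odd, and $c_{t/2}^*=m\lambda_{t/2}=\frac{m(t/2)}{m+t-2}$ when $t$ is even. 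Direct computation for small $t$ (e.g.\ $\gamma_{2,1}=|X|a_1^*/(m\lambda_2)$, yielding $a_1^*=0$ at $t=3$; and $\gamma_{3,1}=(m-1)|X|(c_2^*-m\lambda_2)/(m^2\lambda_2\lambda_3)$ under $a_1^*=0$, yielding $c_2^*=m\lambda_2$ at $t=4$) confirms this pattern, from which the full list in (2) follows. The main obstacle is the bookkeeping in the descent: one must verify at each step that the top-diagonal coefficient $\gamma_{r,r}=|X|\prod_{i=1}^{r}c_i^*/(m\lambda_i)$ and the relevant $b_i^*$ remain nonzero under the inductive hypothesis (guaranteed by $c_i^*>0$ for $1\le i\le s$ and by $b_i^*=m-a_i^*-c_i^*$ with $a_i^*=0$ and $c_i^*=m\lambda_i<m$), so that every link in the chain of equivalences is genuinely bidirectional rather than a one-way implication.
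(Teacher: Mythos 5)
The paper itself gives no proof of this statement---it is quoted from \cite[Theorem~3.1]{S}---so your argument has to stand on its own. Its framework is sound and correctly set up: with $B=\frac{|X|}{m}E_1$ and $G_k(B)=\sum_{j}\gamma_{k,j}E_j$, the identity ${}^t\mathbf{1}E_j\mathbf{1}=|X|\delta_{j,0}$ does reduce the $t$-design condition to $\gamma_{k,0}=0$ for $1\le k\le t$; your master recurrence $c_l^*\gamma_{k,l-1}+a_l^*\gamma_{k,l}+b_l^*\gamma_{k,l+1}=m\lambda_{k+1}\gamma_{k+1,l}+m(1-\lambda_{k-1})\gamma_{k-1,l}$ is correct, as are the initial data, the diagonal formula $\gamma_{r,r}=|X|\prod_{i=1}^{r}c_i^*/(m\lambda_i)$, and the explicit values $\gamma_{2,1}$ and $\gamma_{3,1}$, so the equivalence is genuinely proved for $t\le 4$ (and I checked it continues to work at $t=5,6$).

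The genuine gap is in the general descent step, and it is not where you locate it (nonvanishing of $\gamma_{r,r}$ and $b_i^*$, which is indeed routine). In the step with $l=r$, $k=t-1-r$, the recurrence carries the off-descent terms $c_r^*\gamma_{t-1-r,r-1}$ and $m(1-\lambda_{t-2-r})\gamma_{t-2-r,r}$, whose indices sum to $t-2$. Your claim that ``the Krein conditions already established annihilate'' them is exactly the auxiliary statement that a $(t-1)$-design forces $\gamma_{k,j}=0$ for all $k\ne j$ with $k+j\le t-1$; this does not follow in one line from the lower-index Krein conditions and needs its own induction, which you neither state nor prove. Moreover at the terminal steps these entries have $k=j$ and do \emph{not} vanish (e.g.\ at $t=6$ the term $m(1-\lambda_2)\gamma_{2,2}$ survives and must be combined with $b_2^*\gamma_{3,3}$, using $b_2^*=m-c_2^*=m(1-\lambda_2)$, to produce $c_3^*=m\lambda_3$), so the chain is not literally ``$\gamma_{k-1,1}=0\iff\gamma_{k-2,2}=0\iff\cdots$'' as written, and the identification of the terminal condition ($a_{(t-1)/2}^*=0$ for odd $t$, $c_{t/2}^*=m\lambda_{t/2}$ for even $t$) for general $t$ is asserted from the $t\le4$ pattern rather than derived. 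Both gaps are fillable: either strengthen the induction hypothesis to include the off-diagonal vanishing, or obtain it directly from Theorem \ref{design} by noting $G_k(B)=H_k{}^tH_k$ and $E_j$ are positive semidefinite, hence $\gamma_{k,j}\ge 0$, while $\mathrm{tr}\bigl(G_k(B)G_l(B)\bigr)=\|{}^tH_kH_l\|^2=\sum_j\gamma_{k,j}\gamma_{l,j}m_j$, so ${}^tH_kH_l=0$ for $k\ne l$, $k+l\le t-1$ together with $\gamma_{j,j}>0$ kills all $\gamma_{k,j}$ with $k\ne j$, $k+j\le t-1$; with that lemma, and a general-$t$ computation of the terminal step, your descent closes in both directions.
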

	
\section{Bounds on $s$-distance sets with strength $t$}  
Let $D_i=H_i {^tH_i}$, and $\mathcal{E}_+^{(i)}$ denote the direct sum of the eigenspaces 
corresponding to the all positive eigenvalues of $D_i$. 
\begin{lemma} \label{dim}
The inequality $\dim \mathcal{E}_+^{(i)} \leq h_i$ holds. 
\end{lemma}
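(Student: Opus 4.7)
The plan is to exploit the fact that $D_i = H_i{}^tH_i$ is a Gram-type matrix, so it is automatically symmetric and positive semi-definite; once that is in hand, $\dim\mathcal{E}_+^{(i)}$ just equals $\rank D_i$, and rank considerations immediately give the bound $h_i$.

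First I would verify positive semi-definiteness: for any column vector $v\in\mathbb{R}^{|X|}$,
\[
{}^tv D_i v = {}^tv\, H_i{}^tH_i\, v = \|{}^tH_i v\|^2 \geq 0.
\]
Since $D_i$ is also symmetric, it is diagonalizable with nonnegative real eigenvalues. Hence $\mathcal{E}_+^{(i)}$ is exactly the image of $D_i$, and $\dim\mathcal{E}_+^{(i)} = \rank D_i$.

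Next I would bound $\rank D_i$. The matrix $H_i$ has size $|X|\times h_i$, and the standard identity $\rank(A\,{}^tA) = \rank A$ (which follows from $\ker(H_i{}^tH_i) = \ker{}^tH_i$, proved by the same computation $\|{}^tH_i v\|^2 = {}^tv D_i v$) yields
\[
\rank D_i = \rank H_i \leq h_i.
\]
Combining the two steps gives $\dim\mathcal{E}_+^{(i)} \leq h_i$, as required. There is no real obstacle here — the statement is essentially a rank bound dressed in eigenspace language — so the only care needed is to record the two linear-algebra facts (PSD-ness and $\rank(A{}^tA)=\rank A$) explicitly from the definition of $H_i$.
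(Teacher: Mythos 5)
Your proof is correct and follows essentially the same route as the paper: positive semi-definiteness of $D_i=H_i{}^tH_i$ gives $\dim\mathcal{E}_+^{(i)}=\rank D_i$, and $\rank D_i=\rank H_i\leq h_i$ since $H_i$ has $h_i$ columns. You simply spell out the two linear-algebra facts that the paper leaves implicit.
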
 
\begin{proof}
Since $D_i$ is positive semidefinite, the rank of $D_i$ is equal to $\dim \mathcal{E}_+^{(i)}$. 
Note that the rank of $H_i$ is at most $h_i$. Therefore the rank of $D_i$ is at most $h_i$. 
\end{proof}
Let $\mathcal{E}_0^{(i)}$ denote the eigenspace corresponding to the zero eigenvalue of $D_i$. 
For each $x \in X$, let $e_x$ be the column vector, whose $x$-th entry is $1$, and other entries are $0$. 
Let $V$ denote the real vector space spanned by $\{e_x\}_{x \in X}$.  
Since $D_i$ is a positive semidefinite matrix, 
$V=\mathcal{E}_+^{(i)} \oplus \mathcal{E}_0^{(i)}$ for each $i$.
\begin{lemma} \label{neg}
Let $X$ be an $s$-distance set in $M$. 
Suppose we have $F_X(t)=\sum_{k=0}^s f_k \Phi_k(t)$, where $f_k$ are real numbers. 
Then 
\[
V= \sum_{k: f_k>0} \mathcal{E}_+^{(k)}.
\]
\end{lemma}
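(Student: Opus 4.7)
The plan is to express the identity matrix on $V$ as a linear combination $\sum_k f_k D_k$ and then use positive semidefiniteness to rule out any vector in $V$ orthogonal to $W:=\sum_{k:f_k>0}\mathcal{E}_+^{(k)}$.

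First I would observe that, by construction of $F_X$, we have $F_X(\tau_0)=1$ and $F_X(\tau(x,y))=0$ for every pair of distinct $x,y\in X$. Hence the $|X|\times|X|$ matrix whose $(x,y)$-entry is $F_X(\tau(x,y))$ is precisely the identity matrix $I$ on $V$. On the other hand, the addition formula gives $\Phi_k(\tau(x,y))=\sum_{i=1}^{h_k}\phi_{k,i}(x)\phi_{k,i}(y)$, so the matrix with entries $\Phi_k(\tau(x,y))$ is exactly $H_k{}^tH_k=D_k$. Combining this with the expansion $F_X(t)=\sum_{k=0}^s f_k\Phi_k(t)$ yields the key identity
\[
I=\sum_{k=0}^s f_k D_k \quad \text{on } V.
\]

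Next, pick an arbitrary $v\in V$ lying in the orthogonal complement of $W=\sum_{k:f_k>0}\mathcal{E}_+^{(k)}$ in $V$. For each $k$ with $f_k>0$, $v$ is orthogonal to $\mathcal{E}_+^{(k)}$, and since $D_k$ is positive semidefinite we have $V=\mathcal{E}_+^{(k)}\oplus\mathcal{E}_0^{(k)}$; thus $v\in\mathcal{E}_0^{(k)}$ and $D_kv=0$. Substituting into the identity above, the indices with $f_k>0$ (and those with $f_k=0$) contribute nothing, leaving
\[
v=\sum_{k:\,f_k<0} f_k D_k v.
\]
Taking the inner product with $v$ gives $\|v\|^2=\sum_{k:\,f_k<0} f_k\langle v,D_kv\rangle\le 0$, because each $\langle v,D_kv\rangle\ge 0$ while $f_k<0$. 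Hence $v=0$, which forces $W^\perp\cap V=\{0\}$ and therefore $V=W$.

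The argument is essentially a one-step positivity/semidefiniteness calculation, so there is no genuine obstacle once the identity $I=\sum_k f_k D_k$ is in hand; the only point requiring care is to note that terms with $f_k=0$ drop out so that the residual equation involves only strictly negative coefficients against positive semidefinite matrices, producing the contradiction $\|v\|^2\le 0$ unless $v=0$.
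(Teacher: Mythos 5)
Your proof is correct and follows essentially the same route as the paper: both rest on the identity $I=\sum_{k=0}^s f_k D_k$, the observation that a vector orthogonal to $\sum_{k:f_k>0}\mathcal{E}_+^{(k)}$ is killed by every $D_k$ with $f_k>0$, and a positive-semidefiniteness argument on the remaining terms. Your conclusion via $\|v\|^2\le 0$ is just a mildly more explicit form of the paper's ``eigenvalue $1$ of a negative semidefinite matrix'' contradiction.
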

\begin{proof}
Suppose there exists $v \not\in \sum_{k: f_k>0} \mathcal{E}_+^{(k)}$. 
Then, we can write $v=v_1+v_2$, where $v_1 \in \sum_{k: f_k>0} \mathcal{E}_+^{(k)}$ and 
$0 \ne v_2 \in \bigcap_{k: f_k>0}\mathcal{E}_0^{(k)}$. 
Note that $I=\sum_{k=0}^s f_k D_k$. 
Therefore, 
\begin{align*}
v_2&= \sum_{k=0}^s f_k D_k v_2 \\
&= \sum_{k: f_k < 0} f_k D_k v_2. 
\end{align*}
Then, the matrix $\sum_{k: f_k < 0} f_k D_k$ has an eigenvalue $1$. 
This contradicts that $\sum_{k: f_k < 0} f_k D_k$ is negative semidefinite.  
\end{proof}

\begin{lemma} \label{reduce}
Let $X$ be an $s$-distance set with strength $2s-i$ in $M$, where $2 \leq i \leq 2s$.  
Suppose $F_X(t)=\sum_{k=0}^s f_k \Phi_k(t)$, where $f_i$ are real numbers, and $f_{j} \ne 1/|X|$ for some $\max\{s-i+1,0\} \leq j \leq \lfloor (2s-i)/2 \rfloor$. 
Then, we have $ \mathcal{E}_+^{(j)} \subset W$, where 
\[
 W=\sum_{k=\lfloor s-\frac{i}{2} \rfloor+1}^s \mathcal{E}_+^{(k)}. 
\] 
\end{lemma}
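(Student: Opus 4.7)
The plan is to prove the contrapositive: every $v$ in the orthogonal complement $W^{\perp}$ lies in $\mathcal{E}_0^{(j)}$. Because $D_k$ is positive semidefinite and symmetric, $V = \mathcal{E}_+^{(k)} \oplus \mathcal{E}_0^{(k)}$ is an orthogonal decomposition, so $(\mathcal{E}_+^{(j)})^\perp = \mathcal{E}_0^{(j)}$ and $W^\perp = \bigcap_{k=\lfloor s - i/2 \rfloor + 1}^{s} \mathcal{E}_0^{(k)}$. Once I show $W^\perp \subset \mathcal{E}_0^{(j)}$, taking orthogonal complements yields $\mathcal{E}_+^{(j)} \subset W$. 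Concretely, for $v \in W^\perp$ we have $D_k v = 0$ for every $k > \lfloor s - i/2 \rfloor$.

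The starting point is the matrix identity
\[I = \sum_{k=0}^{s} f_k D_k,\]
which follows from $F_X(\tau(x,y)) = \delta_{x,y}$ (since $F_X$ vanishes on $A(X)$ and $F_X(\tau_0) = 1$) combined with the addition formula $(D_k)_{x,y} = \Phi_k(\tau(x,y))$. Applying this identity to $v \in W^\perp$ annihilates the high-index terms and leaves
\[v = \sum_{k=0}^{\lfloor s - i/2 \rfloor} f_k D_k v.\]
The main step is to left-multiply by $D_j$ and invoke the $(2s-i)$-design property through Theorem~\ref{design}. Because both $j$ and every surviving summation index $k$ lie in $[0, \lfloor s - i/2 \rfloor]$, a parity check gives $j + k \leq 2s - i$, so ${}^tH_j H_k = |X| \delta_{j,k} I$, and consequently $D_j D_k = H_j\,({}^tH_j H_k)\,{}^tH_k = |X| \delta_{j,k} D_j$. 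Only the diagonal term $k=j$ survives, which collapses the equation to $(1 - |X| f_j) D_j v = 0$; the hypothesis $f_j \neq 1/|X|$ then forces $D_j v = 0$, as required.

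The main obstacle is organizational rather than technical: one must check uniformly that the $t$-design orthogonality ${}^tH_j H_k = |X|\delta_{j,k} I$ applies for every pair $(j,k)$ in the relevant range, handling the even and odd cases of $2s-i$ separately so that $2\lfloor s - i/2\rfloor$ is bounded by $2s-i$. Once this range check is in place, the argument reduces to a single algebraic identity powered by the assumption $1 - |X|f_j \neq 0$, and the lower bound $j \geq \max\{s-i+1,0\}$ in the statement plays no role in the proof itself — it only delimits the regime in which the hypothesis $f_j \neq 1/|X|$ can be supplied, e.g.\ via Lemma~\ref{1/|X|}.
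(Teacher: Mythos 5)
Your proof is correct and follows essentially the same route as the paper: both rest on the identity $I=\sum_{k=0}^s f_k D_k$, multiplication by $D_j$, and the design orthogonality ${}^tH_jH_k=|X|\delta_{j,k}I$ (valid because $j+k\le 2\lfloor (2s-i)/2\rfloor\le 2s-i$), with the hypothesis $f_j\ne 1/|X|$ forcing the key cancellation. The only difference is cosmetic: you dualize, showing $W^{\perp}\subset\ker D_j$ and taking orthogonal complements, whereas the paper argues directly that every eigenvector of $D_j$ with positive eigenvalue lies in $W=\sum_{k=\lfloor s-\frac{i}{2}\rfloor+1}^{s}\mathcal{E}_+^{(k)}$; the underlying algebra is identical.
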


\begin{proof}
By Theorem \ref{design}, 
\begin{align*}
D_{j}&=\sum_{k=0}^s f_k D_k D_{j} \\
&=f_{j} |X| D_{j}+ \sum_{k=\lfloor s-\frac{i}{2} \rfloor+1}^s f_k D_k D_{j}, 
\end{align*}
for $\max\{s-i+1,0\} \leq j \leq \lfloor (2s-i)/2 \rfloor$. 
Therefore,
\[
(1-f_{j} |X| )D_{j}= \sum_{k=\lfloor s-\frac{i}{2} \rfloor+1}^s f_k D_k D_{j}. 
\]
where $1-f_{j} |X|\ne 0$. 
Let $v$ be an eigenvector corresponding to an eigenvalue $\lambda>0$ of $D_j$. 
We can write $v=\sum_{m}  v_m^{(k)}$ for each $k$, where $v_m^{(k)}$ is an eigenvector corresponding to an eigenvalue $\lambda_m^{(k)}$ of $D_k$.  
Then, 
\begin{align} 
(1-f_{j} |X| )D_{j}v&= \sum_{k=\lfloor s-\frac{i}{2} \rfloor+1}^s f_k D_k D_{j}v \nonumber \\
(1-f_{j} |X| )\lambda v&= \lambda \sum_{k=\lfloor s-\frac{i}{2} \rfloor+1}^s f_k D_k v \nonumber \\
&= \lambda \sum_{k=\lfloor s-\frac{i}{2} \rfloor+1}^s f_k D_k \sum_{m}  v_m^{(k)} \nonumber \\
&= \lambda \sum_{k=\lfloor s-\frac{i}{2} \rfloor+1}^s f_k  \sum_{m} \lambda_m^{(k)}  v_m^{(k)}. \label{last} 
\end{align}
Note that eigenvectors corresponding to the zero eigenvalue vanishes in (\ref{last}). 
Hence we have $v\in W$. 
The set of eigenvectors corresponding to positive eigenvalues of $D_j$ is a basis of $\mathcal{E}_+^{(j)}$. 
Therefore, $\mathcal{E}_+^{(j)} \subset W$. 
\end{proof}
\begin{remark} \label{rem}
Suppose the conditions in Lemma \ref{reduce}. When $s-i\geq 0$, we have $f_j=1/|X|$ for $0 \leq j \leq s-i$ \cite{Delsarte-Goethals-Seidel}.
\end{remark} 
The following is the main theorem in this section. 
\begin{theorem} \label{main}
Let $X$ be an $s$-distance set with strength $2s-i$ in $M$, where $2 \leq i \leq 2s$.  Suppose $F_X(t)=\sum_{k=0}^s f_k \Phi_k(t)$, where $f_k$ are real numbers. Then, 
\[
|X| \leq 
\sum_{k=0}^{s-i} h_k 
+ \underset{k: f_k = \frac{1}{|X|}}{\sum_{k=\max\{s-i+1,0\}}^{\lfloor s- \frac{i}{2} \rfloor}} h_k 
+\sum_{k=\lfloor s- \frac{i}{2} \rfloor +1}^{s} h_k, 
\]
where $\sum_{k=0}^{s-i} h_k=0$ if $s-i<0$. 
\end{theorem}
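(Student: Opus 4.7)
The plan is to express the ambient space $V$ as a sum of subspaces $\mathcal{E}_+^{(k)}$ drawn from disjoint index ranges, and then bound each summand's dimension by $h_k$ via Lemma~\ref{dim}. The starting point is Lemma~\ref{neg}, which gives $V=\sum_{k:\,f_k>0}\mathcal{E}_+^{(k)}$; the task is to eliminate from this decomposition every index that is not retained in the statement, by absorbing the corresponding $\mathcal{E}_+^{(k)}$ into
\[
W=\sum_{k=\lfloor s-i/2\rfloor+1}^{s}\mathcal{E}_+^{(k)}.
\]

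I would partition $\{0,1,\ldots,s\}$ into $I_1=\{0,\ldots,s-i\}$ (empty when $s-i<0$), $I_2=\{\max\{s-i+1,0\},\ldots,\lfloor s-i/2\rfloor\}$, and $I_3=\{\lfloor s-i/2\rfloor+1,\ldots,s\}$. For $k\in I_1$, Remark~\ref{rem} gives $f_k=1/|X|>0$, so $\mathcal{E}_+^{(k)}$ is retained as an explicit summand. For $k\in I_3$, the containment $\mathcal{E}_+^{(k)}\subset W$ is tautological. For $k\in I_2$, I would split into two subcases: if $f_k=1/|X|$, keep $\mathcal{E}_+^{(k)}$ as a separate summand, and otherwise apply Lemma~\ref{reduce} (valid on precisely this range, noting that $\lfloor(2s-i)/2\rfloor=\lfloor s-i/2\rfloor$ by a parity check on $i$) to conclude $\mathcal{E}_+^{(k)}\subset W$. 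Observe also that if some $k\in I_2$ satisfies $f_k\leq 0$, then in particular $f_k\neq 1/|X|$, so Lemma~\ref{reduce} again absorbs $\mathcal{E}_+^{(k)}$ into $W$; such indices are not needed in Lemma~\ref{neg}'s sum either, so consistency is maintained.

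Assembling, I would conclude
\[
V=\sum_{k=0}^{s-i}\mathcal{E}_+^{(k)}+\underset{f_k=1/|X|}{\sum_{k=\max\{s-i+1,0\}}^{\lfloor s-i/2\rfloor}}\mathcal{E}_+^{(k)}+W,
\]
take dimensions, and apply Lemma~\ref{dim} to bound each $\dim\mathcal{E}_+^{(k)}\leq h_k$ (together with the trivial $\dim W\leq\sum_{k\in I_3}h_k$) to recover the stated inequality. The only delicate element I anticipate is the bookkeeping: verifying the floor identity above by cases, honoring the convention $\sum_{k=0}^{s-i}h_k=0$ when $s-i<0$, and confirming that every $k\in\{0,\ldots,s\}$ with $f_k>0$ that is not listed as an explicit summand is swallowed by $W$ via Lemma~\ref{reduce} or the definition of $W$. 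Beyond these index manipulations, the argument is a direct synthesis of Lemmas~\ref{dim},~\ref{neg},~\ref{reduce} and Remark~\ref{rem}.
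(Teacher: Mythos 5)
Your proposal is correct and follows essentially the same route as the paper: start from $|X|=\dim V$ with $V=\sum_{k:f_k>0}\mathcal{E}_+^{(k)}$ (Lemma~\ref{neg}), use Remark~\ref{rem} for the indices $k\leq s-i$, absorb the middle-range indices with $f_k\neq 1/|X|$ into $W$ via Lemma~\ref{reduce}, and bound dimensions by Lemma~\ref{dim}. Your write-up merely makes explicit the index bookkeeping that the paper's terse proof leaves implicit.
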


\begin{proof}
By Remark \ref{rem} and Lemmas \ref{dim}, \ref{neg}, and \ref{reduce}, 
\begin{align*}
|X| &= \dim (\sum_{k: f_k>0} \mathcal{E}_+^{(k)}) \\
&\leq \dim (\sum_{k=0}^s \mathcal{E}_+^{(k)}) \\
&= \dim(\sum_{k=0}^{s-i} \mathcal{E}_+^{(k)} \oplus \underset{k: f_k = \frac{1}{|X|}}{\sum_{k=\max\{s-i+1,0\}}^{
\lfloor s- \frac{i}{2} \rfloor}} \mathcal{E}_+^{(k)} \oplus \sum_{k=\lfloor s- 
\frac{i}{2} \rfloor+1}^{s} \mathcal{E}_+^{(k)})\\
&\leq \sum_{k=0}^{s-i} h_k + \underset{k: f_k = \frac{1}{|X|}}{\sum_{k=\max\{s-i+1,0\}}^{
\lfloor s- \frac{i}{2} \rfloor}} h_k +\sum_{k=\lfloor s- \frac{i}{2} \rfloor+1}^{s} h_k. 
\end{align*}
\end{proof}
\begin{corollary}\label{corollary}
Let $X$ be an $s$-distance set with strength $2s-i$ in $M$,  
where $2 \leq i \leq s+1$. Then, 
\[
|X| \leq \sum_{k=0}^s h_k-h_{s-i+1}. 
\]
\end{corollary}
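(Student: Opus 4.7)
The plan is to deduce the corollary directly from Theorem \ref{main} together with Lemma \ref{1/|X|}. Write $j = s-i+1$; since $2 \leq i \leq s+1$ we have $0 \leq j \leq s-1$, and moreover $j \leq \lfloor s - i/2 \rfloor$, because this last inequality is equivalent to $i \geq 2$. Hence $j$ lies inside the index range $[\max\{s-i+1, 0\}, \lfloor s-i/2 \rfloor]$ of the middle sum in Theorem \ref{main}. Note also that the hypothesis $i \leq s+1$ forces the strength $t = 2s-i$ to satisfy $t \geq s-1$, so Lemma \ref{1/|X|} is applicable.

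Next I would invoke Lemma \ref{1/|X|}: since $t \geq s-1$, the lemma yields $f_{t-s+1} = f_j \neq 1/|X|$. Therefore the index $k = j$ is excluded from the middle sum of Theorem \ref{main}, which is restricted to those $k$ with $f_k = 1/|X|$. Bounding the restricted sum by the unrestricted sum minus the forbidden term gives
\[
\underset{k: f_k = 1/|X|}{\sum_{k = \max\{s-i+1,0\}}^{\lfloor s - i/2 \rfloor}} h_k \;\leq\; \sum_{k=\max\{s-i+1,0\}}^{\lfloor s-i/2 \rfloor} h_k \;-\; h_{s-i+1}.
\]

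Plugging this estimate into the bound of Theorem \ref{main} and combining the three consecutive ranges $[0, s-i]$, $[\max\{s-i+1,0\}, \lfloor s-i/2 \rfloor]$, and $[\lfloor s-i/2 \rfloor + 1, s]$, we see that they telescope to cover $\{0,1,\ldots,s\}$ exactly once (where the first range is treated as empty when $s-i < 0$, consistent with the convention in Theorem \ref{main}). This yields $|X| \leq \sum_{k=0}^s h_k - h_{s-i+1}$ as required.

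There is no substantive obstacle in this argument; the only care needed is in the boundary case $i = s+1$, where $s-i = -1$ makes the first sum vanish and $s-i+1 = 0$, so Lemma \ref{1/|X|} is being applied at its edge of validity ($t = s-1$ exactly). In all other cases $2 \leq i \leq s$, the argument is completely uniform, and the verification $s-i+1 \leq \lfloor s-i/2 \rfloor$ depends only on $i \geq 2$, which is precisely the lower hypothesis in the corollary.
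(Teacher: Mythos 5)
Your proposal is correct and follows exactly the paper's own route: apply Lemma \ref{1/|X|} (with $t=2s-i\geq s-1$) to get $f_{s-i+1}\neq 1/|X|$, then feed this into Theorem \ref{main} and note that excluding the index $k=s-i+1$ from the middle sum drops $h_{s-i+1}$ from $\sum_{k=0}^{s}h_k$. The only difference is that you spell out the index bookkeeping (that $s-i+1\leq\lfloor s-i/2\rfloor$ precisely when $i\geq 2$, and the boundary case $i=s+1$), which the paper leaves implicit.
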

\begin{proof}
By Lemma \ref{1/|X|}, $f_{s-i+1}\ne 1/|X|$. By Theorem \ref{main}, this corollary follows. 
\end{proof}
\begin{remark}
If we prove $f_k \ne 1/|X|$ for some $s-i+2 \leq k \leq \lfloor s- \frac{i}{2} \rfloor$ under the assumption in Corollary \ref{corollary}, 
then the upper bound is improved. 
\end{remark}
A finite $X \subset S^{m-1}$ is said to be antipodal if $-x \in X$ for any $x\in X$. 
Let $\delta_s=1$ if $s$ is odd, and $\delta_s=0$ if $s$ is even. 
\begin{corollary} \label{anti_coro}
Let $X$ be an antipodal $s$-distance set with strength $2s-2i-1$ in $S^{m-1}$, 
where $1+\delta_s \leq i \leq s + \delta_s$. Then, 
\[
|X| \leq 2 \sum_{k=0}^{\frac{s-\delta_s}{2}} h_{2k}-2 h_{s+\delta_s-2i}. 
\]
\end{corollary}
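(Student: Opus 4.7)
The plan is to mimic the proof of Theorem \ref{main} but parity-split the ambient space via the antipodal involution $\sigma\colon V\to V$, $\sigma(e_x)=e_{-x}$, whose $\pm 1$-eigenspaces $V^\pm$ each have dimension $|X|/2$. Since $\Phi_k(-t)=(-1)^k\Phi_k(t)$, a direct computation gives $\sigma D_k=(-1)^kD_k$, so $\mathcal{E}_+^{(k)}\subset V^+$ for even $k$ and $\mathcal{E}_+^{(k)}\subset V^-$ for odd $k$. Inserting this into Lemma \ref{neg} and comparing dimensions forces
\[
V^+=\sum_{k\ \text{even},\,f_k>0}\mathcal{E}_+^{(k)},\qquad V^-=\sum_{k\ \text{odd},\,f_k>0}\mathcal{E}_+^{(k)},
\]
which already yields the unrefined estimate $|X|/2=\dim V^+\leq\sum_{k=0}^{(s-\delta_s)/2}h_{2k}$.

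To produce the subtraction by $h_j$ with $j:=s+\delta_s-2i$ (note $j$ is even), the plan is to apply Lemma \ref{reduce} at index $j$. Substituting $i'=2i+1$ for $i$ in Lemma \ref{reduce}, a routine check shows that $j\in[\max\{s-2i,0\},\,s-i-1]$ precisely under the hypothesis $1+\delta_s\leq i$, so the only missing ingredient is $f_j\ne 1/|X|$. For even $s$ we have $j=t-s+1$ with $t=2s-2i-1$ and this is immediate from Lemma \ref{1/|X|}. The main obstacle will be the odd-$s$ case, where $j=t-s+2$ sits one step beyond the range of Lemma \ref{1/|X|}. I plan to resolve it by pushing the argument of Lemma \ref{1/|X|} one degree further: setting $G(t)=\Phi_j(t)F_X(t)/h_j$ of degree $t+2$, Theorem \ref{ani} combined with the $t$-design property reduces the identity to
\[
|X|(1-|X|g_0)=g_{t+1}||{^tH_{t+1}}H_0||^2+g_{t+2}||{^tH_{t+2}}H_0||^2.
\]
Antipodality kills $\sum_{x,y\in X}\Phi_k(\tau(x,y))$ for every odd $k$ (pair $y$ with $-y$), so the second term vanishes because $t+2$ is odd for odd $s$. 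The factorization $F_X(t)=\tfrac{t+1}{2}G_X(t)$, where $G_X(t)=\prod_{r=1}^{(s-1)/2}(t^2-\alpha_r^2)/(1-\alpha_r^2)$ is even of degree $s-1$, ensures that the even part of $G$ has degree exactly $t+1$, so $g_{t+1}\ne 0$; combined with $||{^tH_{t+1}}H_0||^2>0$ (strength is exactly $t$), this gives $g_0\ne 1/|X|$, and Lemma \ref{f_0} identifies $g_0=f_j$.

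Once $f_j\ne 1/|X|$ is established, Lemma \ref{reduce} yields $\mathcal{E}_+^{(j)}\subset\sum_{k=s-i}^{s}\mathcal{E}_+^{(k)}$. Since $j$ is even, intersecting with $V^+$ via the parity decomposition absorbs $\mathcal{E}_+^{(j)}$ into $\sum_{s-i\leq k\leq s,\,k\ \text{even}}\mathcal{E}_+^{(k)}$, so this term can be dropped from the sum for $V^+$ without changing its span. Therefore $\dim V^+\leq\sum_{k=0}^{(s-\delta_s)/2}h_{2k}-h_j$, and doubling yields the claim. The key new ingredient beyond Theorem \ref{main} is the antipodal cancellation that kills the degree-$(t+2)$ term, opening the way to a one-step extension of Lemma \ref{1/|X|}.
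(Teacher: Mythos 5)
Your proposal is correct, but it takes a genuinely different route from the paper. The paper handles Corollary \ref{anti_coro} by passing to the quotient: an antipodal $s$-distance set of strength $2s-2i-1$ in $S^{m-1}$ is identified, via \cite[Theorem 9.2]{Levenshtein}, with a $|X|/2$-point $((s-\delta_s)/2)$-distance set of strength $s-i-1$ in the real projective space, which is again two-point-homogeneous; Corollary \ref{corollary} then applies verbatim there, and the stated bound drops out from $\bar{h}_k=h_{2k}$. You instead stay on the sphere: the $\pm1$-eigenspace splitting $V=V^+\oplus V^-$ under the antipodal permutation, together with $\sigma D_k=(-1)^k D_k$, localizes each $\mathcal{E}_+^{(k)}$ in $V^{(-1)^k}$, and your key new ingredient is the one-degree extension of Lemma \ref{1/|X|}: for odd $s$ the index $j=s+\delta_s-2i$ equals $t-s+2$, antipodality kills $||{}^tH_{t+2}H_0||^2$ because $t+2$ is odd, and the factorization $F_X(u)=\frac{u+1}{2}G_X(u)$ with $G_X$ even of degree $s-1$ (valid since $-1\in A(X)$ and, for odd $s$, $0\notin A(X)$) forces $g_{t+1}\neq 0$, giving $f_j\neq 1/|X|$; the final absorption of $\mathcal{E}_+^{(j)}$ into the even part of $W$ is legitimate because $j\leq s-i-1$ lies outside $[s-i,s]$, so $\dim V^+\leq \sum_{k=0}^{(s-\delta_s)/2}h_{2k}-h_j$ and doubling gives the claim. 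The paper's route buys brevity and automatic reuse of Corollary \ref{corollary}; yours buys independence from the projective-space identification and from the fact $\bar{h}_k=h_{2k}$, at the cost of redoing the design-theoretic step by hand, and your antipodal extension of Lemma \ref{1/|X|} is a nice standalone observation. One caveat, shared with the paper's own statement and proof: the subtracted term only makes sense when $i\leq (s+\delta_s)/2$ (otherwise $j<0$), so your assertion that $1+\delta_s\leq i$ alone places $j$ in the admissible range of Lemma \ref{reduce} should be qualified; for larger $i$ only your unrefined estimate $|X|\leq 2\sum_{k=0}^{(s-\delta_s)/2}h_{2k}$ survives.
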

\begin{proof}
The finite set $X$ is identified with a $|X|/2$ point $((s-\delta_s)/2)$-distance set with strength $s-i-1$ in the real projective space \cite[Theorem 9.2]{Levenshtein}. Note that the real projective space is a two-point-homogeneous space. 
Let $h_k$ be the dimension of 
the spherical harmonics for $S^{m-1}$, and $\bar{h}_{k}$ be that for the real projective space.
By Corollary \ref{corollary} for the real projective space, 
\[
\frac{|X|}{2} \leq \sum_{k=0}^{\frac{s-\delta_s}{2}} \bar{h}_k -\bar{h}_{\frac{s}{2}+\frac{\delta_s}{2}-i}
\]
for $1+\delta_s \leq i \leq s+\delta_{s}$.  Note that $\bar{h}_k=h_{2k}$ \cite{Levenshtein}. 
Therefore, this corollary follows. 
\end{proof}

\section{Bounds on $Q$-polynomial schemes}
\begin{theorem}\label{S0}
Let $(X,\mathcal{R})$ be an $s$-class $Q$-polynomial scheme with respect to the ordering $E_0,E_1,\ldots,E_s$.
Define $l=\max\{k\in\{0,1,\ldots,s\}\mid a_0^*=\cdots=a_k^*=0\}$.
\begin{enumerate}
\item If $l=s$ holds, then $|X|\leq 2\binom{m+s-2}{s-1}$.
The equality holds if and only if $X$ is a $(2s-1)$-design and $m_i=h_{i}$ holds for $0\leq i\leq s-1$ and  $m_s=\binom{m+s-2}{s-1}-\binom{m+s-3}{s-2}$.
\item If $(s-1)/2\leq l\leq s-1$ holds, then  $|X|\leq \binom{m+2l-s}{2l+1-s}+\binom{m+s-1}{s}$ holds.
The equality holds if and only if 
\begin{align*}
m_i=
\begin{cases}h_{i,m} & \text{if}\ 2\leq i\leq l+1,\\
\sum_{k=0}^{i-l-2}(h_{i-2k,m}-h_{i-2k-1,m}) & \text{if}\ l+2\leq i\leq s.
\end{cases}
\end{align*}
\item If $l\leq (s-2)/2$ holds, then  $|X|\leq \binom{m+s-1}{s}$ holds.
The equality holds if and only if 
\begin{align*}
m_i=
\begin{cases}h_{i,m} & \text{if}\ 2\leq i\leq l+1,\\
\sum_{k=0}^{i-l-2}(h_{i-2k,m}-h_{i-2k-1,m}) & \text{if}\ l+2\leq i\leq  2l+2, \\
\binom{m+i-1}{i}-\binom{m+i-2}{i-1} & \text{if}\ 2l+3\leq i\leq s.
\end{cases}
\end{align*}
\end{enumerate}
Moreover, when the equality holds in each case $(2)$ or $(3)$, $X$ is a spherical $(2l+2)$-design. 
\end{theorem}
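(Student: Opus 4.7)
The central tool is Lemma~\ref{S3} applied with $A=E_1$, giving $\rank E_1^{\circ k}\leq\binom{m+k-1}{k}$ for every $k\geq 0$. Using the three-term recurrence $E_1\circ E_j=\frac{1}{|X|}(c_{j+1}^*E_{j+1}+a_j^*E_j+b_{j-1}^*E_{j-1})$, I expand $E_1^{\circ k}=\sum_j\alpha_{k,j}E_j$ inductively; because the $E_j$ are mutually orthogonal idempotents, $\rank E_1^{\circ k}=\sum_{j\in S_k}m_j$, where $S_k=\{j:\alpha_{k,j}\neq 0\}$. Since the structure constants $a_j^*,b_j^*,c_j^*$ are nonnegative, the coefficients $\alpha_{k,j}$ stay nonnegative, and I can track $S_k$ structurally.

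For $k\leq l+1$ the hypothesis $a_0^*=\cdots=a_l^*=0$ makes every step of the recurrence shift the index by $\pm 1$, so by induction $S_k=\{j\equiv k\pmod 2,\ 0\leq j\leq k\}$ with all $\alpha_{k,j}>0$. For $k\geq l+2$, $a_{l+1}^*\neq 0$ introduces a self-loop at position $l+1$, and $j\in S_k$ iff there is a length-$k$ walk on $\{0,\ldots,s\}$ from $0$ to $j$ using $\pm 1$-moves (available everywhere via positive $b^*,c^*$) and self-loops at positions where $a^*\neq 0$. The worst case (loops only at $l+1$) gives the smallest possible $S_k$, and the bound must succeed against this worst case.

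A direct walk count then yields the three cases. In case $(3)$ ($l\leq (s-2)/2$, equivalently $2l+3-s\leq 1$), every $j\in\{0,\ldots,s\}$ is reachable by $0\to l+1\to\text{stay}\to j$ within $s$ steps, so $S_s=\{0,\ldots,s\}$ and $|X|=\rank E_1^{\circ s}\leq\binom{m+s-1}{s}$. In case $(2)$, set $i_0=2l+1-s\in[0,l]$; the walk analysis yields $\{0,\ldots,s\}\setminus S_s\subseteq S_{i_0}=\{j\equiv i_0\pmod 2,\ 0\leq j\leq i_0\}$, because elements missed by $S_s$ are exactly those of parity opposite to $s$ lying closer to $0$ than $i_0+2=2l+3-s$, which is precisely $S_{i_0}$. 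Hence
\[
|X|\leq \sum_{j\in S_{i_0}}m_j+\sum_{j\in S_s}m_j\leq \binom{m+i_0-1}{i_0}+\binom{m+s-1}{s}.
\]
In case $(1)$ ($l=s$), parity is fully preserved and $S_s,S_{s-1}$ partition $\{0,\ldots,s\}$; the naive bound $\binom{m+s-1}{s}+\binom{m+s-2}{s-1}$ exceeds $2\binom{m+s-2}{s-1}$ by $\binom{m+s-2}{s}$, and the gap is closed by exploiting $b_s^*=0$: then $E_1\circ E_s=\frac{b_{s-1}^*}{|X|}E_{s-1}$ has no $E_{s+1}$ contribution, so $E_1^{\circ(s+1)}$ has support inside $S_{s-1}$ and its rank bound combined with $b_{s-1}^*m_{s-1}=c_s^*m_s=m\,m_s$ provides the extra inequality that tightens the estimate on $m_s$.

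For the equality and design statements, Lemma~\ref{S3}(2) implies that equality in case $(2)$ or $(3)$ forces $\rank E_1^{\circ k}=\binom{m+k-1}{k}$ for every $k\leq s$; telescoping the identities $\sum_{j\leq k,\,j\equiv k\pmod 2}m_j=\binom{m+k-1}{k}$ on $k\leq l+1$ yields $m_j=h_{j,m}$ for $j\leq l+1$, while the higher rank equalities together with the Krein recurrence pin down $m_i$ for $i>l+1$ to the stated formulas. Reading the recurrence backward also forces $c_j^*=mj/(m+2j-2)$ for $j\leq l+1$, which, with $a_i^*=0$ for $i\leq l$, is exactly Theorem~\ref{SHO}'s condition for $X$ to be a spherical $(2l+2)$-design. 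The main obstacle is case $(1)$: the naive parity partition gives only the weaker bound, so the extra saving of $\binom{m+s-2}{s}$ requires a non-obvious use of $b_s^*=0$; a secondary delicate point is verifying that the worst-case walk analysis of $S_k$ for $k>l+1$ produces a bound that holds uniformly regardless of the unspecified values $a_j^*$ ($j>l+1$) in the particular scheme.
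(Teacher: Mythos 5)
Your treatment of cases (2) and (3), of the equality analysis via Lemma~\ref{S3}(2) and the resulting triangular system for the $m_i$, and of the $(2l+2)$-design conclusion via Theorem~\ref{SHO}, is essentially the paper's own argument: expand $E_1^{\circ k}$ by the three-term recurrence, note that the coefficients occurring are positive (so the rank is the sum of the $m_j$ over the support), bound the rank by $\binom{m+k-1}{k}$, and cover $\{0,\ldots,s\}$ by the support of $E_1^{\circ(2l+1-s)}$ and $E_1^{\circ s}$ (resp.\ of $E_1^{\circ s}$ alone). Your ``worst case'' phrasing is harmless, since a larger support only strengthens the covering.

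The genuine gap is case (1), $l=s$. The paper does not argue internally at all there: it invokes the result of Martin--Muzychuk--Williford that a $Q$-bipartite scheme has an antipodal spherical embedding, and then applies the Delsarte--Goethals--Seidel bound $2\binom{m+s-2}{s-1}$ for antipodal sets (systems of lines), with the equality characterization ($(2s-1)$-design and the stated multiplicities) also imported from DGS. Your proposed repair of the naive parity bound $\binom{m+s-1}{s}+\binom{m+s-2}{s-1}$ does not work as described: $E_1^{\circ(s+1)}$ has the same support (the parity class of $s-1$) as $E_1^{\circ(s-1)}$, so its rank bound $\binom{m+s}{s+1}$ is strictly weaker than the inequality $\sum_{k\equiv s-1}m_k\leq\binom{m+s-2}{s-1}$ you already have, and the relation $m\,m_s=b_{s-1}^*m_{s-1}$ only yields $m_s<m_{s-1}$. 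To close the gap of $\binom{m+s-2}{s}$ along these lines one would need quantitative lower bounds such as $c_j^*\geq mj/(m+2j-2)$, which you neither state nor prove and which are not available without first knowing a design property; already for $s=3$ your inequalities reduce exactly to the unproven claim $c_2^*\geq 2m/(m+2)$. You also leave the equality statement of case (1) (that $X$ is a $(2s-1)$-design with $m_i=h_i$ for $i\leq s-1$ and the stated $m_s$) untouched. So either supply a genuinely new argument for the $Q$-bipartite case or, as the paper does, route it through the antipodal embedding and the known bound for antipodal $s$-distance sets.
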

\begin{proof}
Suppose $l=s$, namely the scheme $(X,\mathcal{R})$ is $Q$-bipartite.
Then, by \cite[Corolalry~4.2]{MMW}, the image of the embedding of the scheme into the unit sphere is an antipodal set.
Therefore it follows from \cite{Delsarte-Goethals-Seidel2} that $|X|\leq 2\binom{m+s-2}{s-1}$.
When the equality holds, \cite[Theorem~6.8, Remark~7.6]{Delsarte-Goethals-Seidel} says that $m_i=Q_i(1)=h_{i}$ holds for $0\leq i\leq s-1$ and by (\ref{S5}), $m_s=\binom{m+s-2}{s-1}-\binom{m+s-3}{s-2}$.

Suppose $l\leq s-1$.
The three term recurrence and the conditions $a_1^*=\cdots=a_l^*=0$ implies that for $0\leq i\leq l+1$, there exist positive real numbers 
$f_{i,k}$ such that 
\begin{align}E_1^{\circ i}=\underset{k\equiv i \pmod 2}{\sum_{k=0}^i}f_{i,k}E_k.\label{S11}\end{align}
When $l+1\leq s-1$, the three term recurrence and $a_{l+1}^*\neq0$ imply that for $1\leq i\leq s-l-1$, there exist positive real numbers $f_{l+1+i,k}$  such that  
\begin{align}E_1^{\circ (l+1+i)}=\underset{k\equiv l+1+i\pmod2}{\sum_{k=0}^{l+1+i}}f_{l+1+i,k}E_k+\underset{k\equiv l+i\pmod2}{\sum_{k=\max\{l+2-i,0\}}^{ l+i}}f_{l+1+i,k}E_k.\label{S12}\end{align}

Taking the rank of the both hand sides in (\ref{S11}) and (\ref{S12}), it follows from Lemma~\ref{S3} that
\begin{align}
&\underset{k\equiv i\pmod2}{\sum_{k=0}^i}m_k\leq\binom{m+i-1}{i} \text{ for }0\leq i\leq l+1,\label{S1} \\
&\underset{k\equiv l+1+i\pmod2}{\sum_{k=0}^{l+1+i}}m_k+\underset{k\equiv l+i\pmod2}{\sum_{k=\max\{l+2-i,0\}}^{l+i}}m_k\leq\binom{m+l+i}{l+i+1}\text{ for }1\leq i\leq s-l-1\label{S2}.
\end{align} 

(2): Substituting $i=2l+1-s$ and $i=s-l-1$ into (\ref{S1}) and (\ref{S2}) respectively, 
using the equation (\ref{S5}), 
\begin{align*}
|X|&=\sum_{k=0}^sm_k \\
&\leq \underset{k\equiv 2l+1-s\pmod2}{\sum_{k=0}^{2l+1-s}}m_k+\underset{k\equiv s\pmod2}{\sum_{k=0}^s}m_k+\underset{k\equiv s-1\mod2}{\sum_{k=2l+3-s}^{s-1}}m_k \\
&\leq \binom{m+2l-s}{2l+1-s}+\binom{m+s-1}{s}.
\end{align*} 
The equalities hold in case (2) if and only if $\rank E_1^{\circ (2l-s)}=\binom{m+2l-s}{2l+1-s}$ and $\rank E_1^{\circ s}=\binom{m+s-1}{s}$.
By Lemma~\ref{S3} (2), this condition is equivalent to $\rank E_1^{\circ i}=\binom{m+i-1}{i}$ for $0\leq i\leq s$.
Then we obtain a system of linear equations from (\ref{S1}) for $2\leq i\leq l+1$ and (\ref{S2}) for $1\leq i\leq s-l-1$ whose unknowns are $\{m_i\mid 2\leq i\leq s\}$.
Its coefficient matrix is a lower triangluar matrix with non-zero diagonals.
Therefore the equality holds if and only if $m_i$ for $2\leq i\leq s$ are uniquely determined as desired. 

(3): Substituting $i=s-l-1$ into (\ref{S2}), using the equation (\ref{S5}), 
\begin{align*}
|X|&=\sum_{k=0}^s m_k \\
&=\underset{k\equiv s\pmod2}{\sum_{k=0}^s}m_k+\underset{k\equiv s-1\pmod2}{\sum_{k=0}^{s-1}}m_k \\
&\leq\binom{m+s-1}{s}.
\end{align*}
Since the same method in (2) is applied in (3), the equality holds if and only if $m_i$ for $2\leq i\leq s$ are uniquely determined as desired.

When the equality holds in each case (2), (3), $m_i=h_{i,m}$ holds for $0\leq i\leq l+1$.
Repeated application of the formula (\ref{S5}) together with (\ref{S4}), we have $c_j^*=mj/(m+2j-2)$ for $0\leq j\leq l+1$.
Recall we assume $a_i^*=0$ for $0\leq i\leq l$.
It follows from Theorem \ref{SHO} that $X$ is a spherical $(2l+2)$-design.
\end{proof}

\begin{remark} 
In Theorem~\ref{S0} when the equality holds for $l=s-1$ (resp.\ $l=s$), then $X$ is a tight $2s$-design (resp.\ tight $(2s-1)$-design) in $S^{m-1}$ \cite{Delsarte-Goethals-Seidel}. 
\end{remark}

\section{Examples}
\begin{example}
Let $\Omega$ be the minimum vectors, which rescaled to the norm $1$, of the Leech lattice in $\mathbb{R}^{24}$. Fix $u,v \in \Omega$ such that $(u,v)=-1/4$.
Define $X$ by $\{x\in \Omega \mid (x,u)=1/2,(x,v)=0\}$.
Then $|X|=2025$.
Considering the projection onto $\mathbb{R}^{22}$, we may regard $X$ as a subset in the $S^{21}$.
Then $X$ is a spherical $3$-distance set with strength $4$ in $S^{21}$.
Since $h_{0}=1$, $h_{1}=22$, and $h_{3}=2002$, $X$ attains the upper bound in Corollary~\ref{corollary}. 

On the other hand, since $X$ satisfies $t=2s-2$, $X$ carries a $Q$-polynomial scheme whose Krein matrix $B_1^*$ is
$$B_1^*=\begin{pmatrix}
0  & 1  & 0 & 0 \\
22 & 0  & 11/6 & 0 \\
0  & 21 & 27/22 & 30/11 \\
0  & 0  & 625/33 & 212/11  
\end{pmatrix}
.$$
Then the scheme $X$ also attains the bound in Theorem~\ref{S0}. 
\end{example}

\begin{example}
Let $X$ be the set of vertices of the dodecahedron in $\mathbb{R}^3$.
Then, $X$ is an antipodal spherical $5$-distance set with strength $5$.
Since $h_{0}=1$ and $h_{4}=9$, $X$ attains the upper bound in Corollary \ref{anti_coro}.

\end{example}
	

{\it Hiroshi Nozaki}\\
	Graduate School of Information Sciences, \\
	Tohoku University \\
	Aramaki-Aza-Aoba 09, \\
	Aoba-ku, \\
	Sendai 980-8579, \\
	Japan\\ 
	nozaki@ims.is.tohoku.ac.jp\\
	\quad\\
{\it Sho Suda}\\ 
	Graduate School of Information Sciences, \\
	Tohoku University \\
	Aramaki-Aza-Aoba 09, \\
	Aoba-ku, \\
	Sendai 980-8579, \\
	Japan\\ 
	suda@ims.is.tohoku.ac.jp\\

\end{document}